\newcommand{\EE}{\mathds{E}}
\newcommand{\PP}{\mathds{P}}
\newcommand{\R}{\mathds{R}}
\renewcommand{\epsilon}{\varepsilon}
\newtheorem{thm}{Theorem}[section]
\newtheorem{lemme}[thm]{Lemma}
\newtheorem{prop}[thm]{Proposition}
\theoremstyle{definition}
\numberwithin{equation}{section}
\author{Hendrik Weber}
\title{On the short time asymptotic of the  stochastic Allen-Cahn equation}
\begin{document}
\maketitle
\begin{abstract}
A description of the short time behavior of solutions of the Allen-Cahn equation with a smoothened additive noise is presented. The key result is that in the sharp interface limit solutions move according to motion by mean curvature with an additional stochastic forcing. This extends a similar result of Funaki \cite{Fu99} in spatial dimension $n=2$ to arbitrary dimensions.
\end{abstract}

\section{Introduction and main result}
{\em 1. Setting and main result:} For a small parameter $\varepsilon >0$ consider the following stochastic Allen-Cahn equation in an open domain $D$ in $\R^n$ for some $n \geq 2$:
\begin{align}\label{SAC}
 \nonumber \frac{\partial}{\partial t}u^\epsilon (x,t)&=\Delta u^\epsilon(x,t)+\epsilon^{-2} f(u(x,t)) +\epsilon^{-1} \xi^\epsilon(t)  \qquad &&(x,t) \in D \times [0, \infty)\\
\frac{\partial}{\partial \nu} u^\epsilon(x,t)&=0 \qquad &&x \in \partial D\\
\nonumber u^\epsilon(x,0)&= u_0^\epsilon(x) \qquad &&x \in  D.
\end{align}
Here $f(u) = -F'(u)$ is the negative derivative of a symmetric double-well potential. For fixing ideas, assume that $F(u)= \frac{(u^2-1)^2}{4}$ and $f(u)=u-u^3$. In particular $F$ has two global minima at $\pm 1$ and solutions of the dynamical system $\dot{x}=f(x)$, that start outside of zero, converge to one of these minima.  The expression $\xi^\epsilon(t)$ denotes a noise term defined on a probability space $(\Omega,\mathcal{F},\PP)$. The noise $\xi^\epsilon(t)$ is constant in space and smooth in time. For $\epsilon\downarrow 0$ the correlation length goes to zero at a precise rate and $\int_0^t \xi^\epsilon(s)ds$ converges to a Brownian motion pathwisely. The details of the construction and further properties can be found below. 
\vskip2ex
We study the short time evolution of developed surfaces for (\ref{SAC}). More precisely let $\Sigma_0$ the boundary of a set $U_0$ be compactly embedded in $D$ of class $C^{2, \alpha}$ for some $\alpha>0$. Assume that the initial configuration $u^\epsilon(x,t)$ is close to $-1$ on $U_0$ and close to $+1$ on $D \setminus U_0$ with a transition layer of order $O(\epsilon)$. We show that for short times there exist two phases and the evolution of the phase boundary follows two influences - the tendency to minimize the boundary and a stochastic effect. The main result is:
\begin{thm}\label{MR}
Consider the problem (\ref{SAC}) with the noise term $\xi^\epsilon(t)$ as constructed below. In particular suppose that the approximation rate $\gamma$ verifies $\gamma < \frac{2}{3}$. Then for any compactly embedded hypersurface $\Sigma_0=\partial U_0$ of class $C^{2,\alpha}$ there exist initial conditions $u^\epsilon$, a positive stopping time $\tau$ and randomly evolving closed hypersurfaces $(\Sigma(t))_{0 \leq t \leq \tau}$ such that the following hold:
\begin{itemize}
\item[(i)] The surfaces $(\Sigma_t)_{0 \leq t \leq \tau}$ evolve according to stochastically perturbed motion by mean curvature, e.g. the normal velocity $V$ at each point is given by
\[
  V= (n-1)\kappa  - c_0 \dot{W(t)}.
\]
\item[(ii)] $\sup_{0 \leq t \leq \tau} \|u^\epsilon(x,t)- \chi_{\Sigma_t} \|_{L^2(D)} \rightarrow 0$ almost surely as $\epsilon$ goes to zero.
 \end{itemize}
\end{thm}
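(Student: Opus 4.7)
The plan is to identify the limit interface $(\Sigma_t)$ as the solution to the stochastic mean curvature flow $V=(n-1)\kappa - c_0 \dot W$ first, and only then build the initial data $u_0^\epsilon$ and the approximation to $u^\epsilon$ on top of $\Sigma_t$. The comparison with $u^\epsilon$ will be carried out pathwise, by a matched-asymptotics ansatz corrected by the stochastic forcing, followed by a parabolic sub/supersolution argument. The scheme follows the deterministic sharp interface limit of de Mottoni--Schatzman / Chen / X.~Chen, adapted as in Funaki \cite{Fu99} to incorporate spatially constant noise.

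\textbf{Step 1 (Construction of $\Sigma_t$).} Parametrize surfaces near $\Sigma_0$ by a height function $h(y,t)$ over $\Sigma_0$ and rewrite the geometric equation as a quasilinear parabolic SPDE whose stochastic part is purely additive in $h$, of the form $\partial_t h = (n-1) H(h, \nabla h, \nabla^2 h) - c_0 \dot W$. The substitution $\tilde h(y,t) = h(y,t) + c_0 W(t)$ removes the noise and yields a classical quasilinear parabolic equation with coefficients depending continuously on the path of $W$. Short-time existence in $C^{2,\alpha}$ then follows pathwise from Schauder theory, producing a strictly positive stopping time $\tau$ (the first time $\tilde h$ leaves a prescribed $C^{2,\alpha}$-ball or $\Sigma_t$ hits $\partial D$).

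\textbf{Step 2 (Inner/outer ansatz with stochastic shift).} Let $m$ be the standing wave solving $m''+f(m)=0$ with $m(\pm\infty)=\pm 1$ and let $c_0 := \bigl(\int_\R (m')^2\bigr)^{-1}$. Let $d(x,t)$ be the signed distance to $\Sigma_t$ on a tubular neighborhood. One naive choice would be $m(d/\epsilon)$; inserting it into \eqref{SAC} produces a residual of order $\epsilon^{-1}m'(d/\epsilon)\bigl(V-(n-1)\kappa\bigr)$ in the transition layer, plus the noise source $\epsilon^{-1}\xi^\epsilon$ which is not absorbed. Because $\xi^\epsilon$ is \emph{spatially constant}, one can absorb it by translating the profile: set
\[
 u^\epsilon_a(x,t) \;=\; m\!\left(\frac{d(x,t) + \epsilon\, \eta^\epsilon(x,t)}{\epsilon}\right), \qquad \eta^\epsilon \;\text{a slow spatial cutoff of a function of $t$,}
\]
and choose the outer expansion so that the leading residual vanishes precisely when $V = (n-1)\kappa - c_0\dot W$, which is the equation already imposed in Step~1. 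The initial profile $u_0^\epsilon$ is defined as the time-zero value of $u^\epsilon_a$. The remaining residual is then of order $\epsilon^{2\gamma-1}$ (the rate at which $\epsilon \int_0^t\xi^\epsilon$ converges to $\epsilon W$ in $C^0$), so the smallness condition $\gamma<2/3$ ensures the residual is $o(\epsilon^{1/3})$, well below the spectral gap margin used in Step~3.

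\textbf{Step 3 (Barriers and closure).} Following the standard construction, form sub- and supersolutions $u^\epsilon_\pm = u^\epsilon_a(x,t;\, \pm \rho^\epsilon(t))$ with an additional deterministic translate $\rho^\epsilon(t)\to 0$ chosen large enough to dominate the residual of Step~2 but small enough not to disturb the geometric limit. The key linear ingredient is the spectral gap of $-\partial_{ss}^2 - f'(m(s))$ on the orthogonal complement of $m'$, which ensures that a first-order linear correction can absorb the residual with a controlled amplitude. Verifying $\partial_t u^\epsilon_\pm - \Delta u^\epsilon_\pm - \epsilon^{-2}f(u^\epsilon_\pm) \mp \epsilon^{-1}\xi^\epsilon \gtrless 0$ on $[0,\tau]$ is then a direct calculation, and the Neumann condition is respected by choosing $\Sigma_0 \Subset D$ and taking $\tau$ small enough. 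The parabolic comparison principle applied pathwise to \eqref{SAC} (for each fixed $\omega$) yields $u^\epsilon_-\leq u^\epsilon \leq u^\epsilon_+$, from which (ii) follows because both barriers converge in $L^2(D)$ to $\chi_{\Sigma_t}$ uniformly in $t\in[0,\tau]$.

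The main obstacle I anticipate is Step~2: the noise enters (\ref{SAC}) at the singular order $\epsilon^{-1}$ and only formally cancels against the translated profile; making this cancellation rigorous requires carefully controlling higher order terms created by the $x$-dependent cutoff $\eta^\epsilon$, and the exponent $\gamma<2/3$ must arise naturally here as the threshold beyond which the stochastic residual starts to dominate the spectral gap correction. Everything else is either classical PDE (Step~1, Step~3 maximum principle) or a pathwise specialisation of the deterministic theory.
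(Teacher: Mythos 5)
Your overall scheme (pathwise sub/supersolutions, matched asymptotics, comparison principle) is the right one, but Step~2 contains a genuine gap that would make the argument fail, and it is exactly the point where the paper's construction differs from yours.

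You propose to use the \emph{standing} wave $m$ solving $m''+f(m)=0$, $m(\pm\infty)=\pm 1$, and to absorb the forcing term $\epsilon^{-1}\xi^\epsilon$ by translating the profile, $u^\epsilon_a=m\bigl((d+\epsilon\eta^\epsilon)/\epsilon\bigr)$, on the grounds that $\xi^\epsilon$ is spatially constant. This cannot work: the translation multiplies every residual contribution by $m'\bigl((d+\epsilon\eta)/\epsilon\bigr)$, which decays exponentially off the interface, while the noise source $\epsilon^{-1}\xi^\epsilon$ is constant in $x$. Away from the transition layer the ansatz gives $\partial_t u^\epsilon_a\approx 0$, $\Delta u^\epsilon_a\approx 0$, $f(u^\epsilon_a)\approx f(\pm 1)=0$, so the residual of the approximate solution is $-\epsilon^{-1}\xi^\epsilon$, which under your own pathwise bound is of size $\epsilon^{-1-\gamma/2}$ and cannot be dominated by any spectral-gap correction or by the translate $\rho^\epsilon(t)$. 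Only the $L^2(m'\,ds)$-projection of the residual is killed by a translation; the orthogonal part does not decay. The bulk of the residual must instead be absorbed by shifting the rest states themselves. This is precisely what the paper does: it uses the $\delta$-dependent travelling wave $m(\cdot,\delta)$ from Lemma~\ref{CHL}, with $m(\pm\infty,\delta)=m_\pm(\delta)$ the zeros of $f(\cdot)+\delta$ and a nonzero wave speed $c(\delta)$, and plugs in $\delta=\epsilon\xi^\epsilon(t)\pm\epsilon^\beta$. Then the modified reaction identity $m''+c(\delta)m'+f(m)+\delta=0$ cancels the constant-in-$x$ noise exactly, and the noise-induced drift of the interface arises as $\epsilon^{-1}c(\epsilon\xi^\epsilon)\to -c_0\dot W$ rather than as a hand-inserted translation.

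Two further points. First, the $\delta$-dependence of the profile creates a new term $\epsilon\, m_\delta\,\dot\xi^\epsilon(t)$ in $\mathcal L(u^\pm)$; controlling it requires the pathwise bound $|\dot\xi^\epsilon|\lesssim \epsilon^{-1}$, which is where the paper genuinely needs $\gamma<2/3$ (your explanation of the role of $\gamma<2/3$ via the rate of convergence of $\epsilon\int_0^\cdot\xi^\epsilon$ is not the actual constraint). Second, in Step~1, parametrizing by a height function $h$ over $\Sigma_0$ makes the noise \emph{multiplicative} (the normal velocity relates to $\partial_t h$ through $\sqrt{1+|\nabla_{\Sigma_0}h|^2}$), so the substitution $\tilde h=h+c_0 W$ does not remove the noise cleanly. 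The paper avoids this by working with the signed distance function as in Dirr--Luckhaus--Novaga, for which $|\nabla d|\equiv 1$ forces the noise to be purely additive and $q=d-W$ solves a deterministic parabolic PDE. (Also, your $c_0=\bigl(\int(m')^2\bigr)^{-1}$ is off by a factor $2$; the Fredholm condition gives $c_0=2\bigl(\int(m')^2\bigr)^{-1}=\sqrt 2/\int_{-1}^1\sqrt{F}$.)
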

Here $\kappa$ denotes the mean curvature of the surface at a given point. The constant $c_0$ is given by
\[
 c_0=\frac{\sqrt{2}}{\int^1_{-1} \sqrt{F(u)} \mathrm{du}}.
\]
The function $\chi_{\Sigma_t}$ is a step function taking the value $-1$ in the interior and $+1$ on the exterior. The precise meaning of the geometric evolution will be given in the next section. 
\vskip2ex
The noise scaling $\epsilon^{-1} \xi^\epsilon(t)$ can be interpreted as follows: Consider the stochastic equation
\begin{equation}\label{SAC2}
 \frac{\partial v}{\partial t}= \Delta v+ f(v)+ \epsilon \xi^\epsilon(t).
\end{equation}
Equation (\ref{SAC}) can be obtained from this equation by diffusive scaling: $u(x,t)=v(\epsilon^{-1} x, \epsilon^{-2} t)$. The intuition is that in (\ref{SAC2}) surfaces should move with velocity $V=(n-1)\kappa + c(\epsilon \xi^\epsilon(t))$. Here $c$ is the speed of a travelling wave solution corresponding to a perturbation of the potential through $\epsilon \xi^\epsilon(t)$. Then after rescaling one obtains as normal velocity $V=\kappa + \epsilon^{-2} \times \epsilon^{1} c(\epsilon \xi^\epsilon(t))$ such that the random term becomes a quantity of order $O(1)$. The significant observation is that the noise term does not rescale. Actually this observation is characteristic for our result. Even in the limit the Brownian motion can be considered pathwise and there is nowhere any need to work with stochastic integrals.
\vskip2ex

{\em 2. The white noise approximation:} Let $(W(t),t \geq 0)$ be a Brownian motion defined on a probability space $(\Omega,\mathcal{F},\PP)$. For technical reasons extend the definition of $(W(t),t \geq 0)$ to negative times by considering an independent Brownian motion $(\widetilde{W}(t),t \geq 0)$ and setting $W(t)=\widetilde{W}(-t)$ for $t < 0$. Then $(W(t),t \in \R)$ is a gaussian process with independent stationary increments and a distinguished point $W(0)=0$ a.s. Let $\rho$ be a mollifying kernel i.e. $\rho\colon \mathds{R} \to \mathds{R}_+$ is smooth and symmetric with $\rho(x)=0$ outside of $[-1,1]$ and $\int \rho(x)\mathrm{dx}=1$. For $\gamma >0$ set $\rho^{\epsilon}(x)=\epsilon^{-\gamma}\rho(\frac{x}{\epsilon^\gamma})$. Then the approximated Brownian motion $W^{\epsilon}(t)$ is defined as usual as
\[
 W^\epsilon(t)=W \ast \rho^{\epsilon}(t)=\int_{-\infty}^\infty \rho^\epsilon(t-s)W(s) \mathrm{ds}.
\]
Note that it is only here that the Brownian motion at negative times is needed. So actually only negative times in $(-\epsilon^\gamma,0]$ will play a role. The parameter $\gamma$ determines how quickly the approximations converge to the true integrated white noise. We will always assume 
\[
\gamma < \frac{2}{3}
\]
 in order to have the needed pathwise bounds on the white noise approximations.
\begin{prop}
 Let $\xi^\epsilon(t)=\dot{W}^\epsilon(t)$ denote the derivative of $W^\epsilon$. Then the following properties hold:
\begin{itemize}
 \item[(i)] $\xi^\epsilon(t)$ is a stationary centered gaussian process with $\EE[\xi^\epsilon(t)^2]= \epsilon^{-\gamma} |\rho|_{L^2}^2$. 
 \item[(ii)] The correlation length of $\xi^\epsilon(t)$ is $2 \epsilon^\gamma$ i.e. if $|s-t| \geq 2 \epsilon^\gamma$ then  $\xi^\epsilon(t)$  and  $\xi^\epsilon(s)$ are independent. 
 \item[(iii)] If $\gamma < \tilde{\gamma}$ for every positive time $T$ there exists a non-random constant $C$ such that
\[
 \PP\Bigl[ \exists \epsilon_0  \text{ s.th. }\forall \epsilon \leq \epsilon_0 \sup_{0 \leq t \leq T}|\xi^\epsilon(t)| \leq C \epsilon^{-\frac{\tilde{\gamma}}{2}} \Bigr] =1.
\]
In particular for $\gamma < \frac{2}{3}$ for $\epsilon$ small enough 
\begin{equation}\label{noi1}
 \xi^\epsilon(t) \leq C\epsilon^{ -\frac{1}{3}}.
\end{equation}

\end{itemize}
\end{prop}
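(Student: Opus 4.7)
My plan is to treat the three items separately, with (i) and (ii) following quickly from a Wiener-integral representation of $\xi^\epsilon$, and (iii) requiring a discretize-then-Borel--Cantelli argument based on Gaussian tail bounds.

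For (i) and (ii), I first observe that since $\rho^\epsilon$ is smooth and compactly supported, one may pull the derivative into the convolution and then integrate by parts against the Brownian path to obtain
\[
\xi^\epsilon(t) = \dot W^\epsilon(t) = \int_\R \rho^\epsilon(t-s)\,dW(s),
\]
a Wiener integral against two-sided Brownian motion. From this (i) is immediate: the integrand is deterministic, so $\xi^\epsilon(t)$ is a centered Gaussian; stationarity follows from translation invariance of the white-noise law; and the It\^o isometry together with a change of variable gives $\EE[\xi^\epsilon(t)^2] = \|\rho^\epsilon\|_{L^2}^2 = \epsilon^{-\gamma}\|\rho\|_{L^2}^2$. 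For (ii), $\rho^\epsilon(t-\cdot)$ is supported in $[t-\epsilon^\gamma,t+\epsilon^\gamma]$, so for $|t-s|\ge 2\epsilon^\gamma$ the integrands defining $\xi^\epsilon(t)$ and $\xi^\epsilon(s)$ use disjoint portions of $dW$ and the two variables are therefore independent.

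For (iii), the pointwise tail from (i) reads $\PP[|\xi^\epsilon(t)|>\lambda]\le 2\exp(-\lambda^2\epsilon^\gamma/(2\|\rho\|_{L^2}^2))$. To pass to the supremum over $[0,T]$ I discretize at spacing $\delta_\epsilon=\epsilon^\beta$, where $\beta$ is large enough that the oscillation of $\xi^\epsilon$ between consecutive grid points is of smaller order than the target $\epsilon^{-\tilde\gamma/2}$; the required modulus is provided by the direct computation $\EE[\dot\xi^\epsilon(t)^2] = \|\dot\rho^\epsilon\|_{L^2}^2 = O(\epsilon^{-3\gamma})$, together with the fact that $\dot\xi^\epsilon$ is itself a stationary Gaussian process (so Gaussian tail bounds on $\sup|\dot\xi^\epsilon|$ apply). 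A union bound over the $O(\epsilon^{-\beta})$ grid points then gives
\[
\PP\Bigl[\sup_{0\le t\le T}|\xi^\epsilon(t)|>C\epsilon^{-\tilde\gamma/2}\Bigr] \le C'\epsilon^{-K}\exp\bigl(-c\,C^2\,\epsilon^{-(\tilde\gamma-\gamma)}\bigr)
\]
for some $K$ independent of $\epsilon$. Since $\tilde\gamma>\gamma$ the exponential beats every power, so for $C$ large the right-hand side is summable along the dyadic sequence $\epsilon_n=2^{-n}$, and Borel--Cantelli yields the bound almost surely along $(\epsilon_n)$. The specialisation \eqref{noi1} then follows by choosing any $\tilde\gamma\in(\gamma,2/3)$, since then $\epsilon^{-\tilde\gamma/2}\le \epsilon^{-1/3}$ for $\epsilon<1$.

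The main obstacle is passing from the countable sequence $(\epsilon_n)$ to all sufficiently small $\epsilon$: the events at different values of $\epsilon$ are a priori distinct, and one needs a single almost-sure event on which the bound holds uniformly. I would handle this by applying the same discretize-and-union-bound strategy to the two-parameter Gaussian field $(t,\epsilon)\mapsto \xi^\epsilon(t)$ on the compact region $[0,T]\times[\epsilon_{n+1},\epsilon_n]$, using the smoothness of $\epsilon\mapsto\rho^\epsilon$ to provide a joint modulus of continuity in $\epsilon$; the slack $\tilde\gamma-\gamma>0$ in the exponent leaves ample room to absorb the additional constants introduced by this interpolation, and a second application of Borel--Cantelli closes the argument.
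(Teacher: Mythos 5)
For parts (i) and (ii) you and the paper argue identically, via the Wiener-integral representation $\xi^\epsilon(t)=\int\rho^\epsilon(t-s)\,dW(s)$ and the It\^o isometry. For part (iii), however, you take a genuinely different and substantially heavier route. The paper observes that $\int\rho'=0$, so one may replace $W(s)$ by $W(s)-W(t)$ inside the convolution and bound $|\xi^\epsilon(t)|$ by $2\epsilon^{-\gamma}\|\rho'\|_\infty\,\osc_{[t-\epsilon^\gamma,t+\epsilon^\gamma]}W$; L\'evy's modulus of continuity then provides, on a \emph{single} almost-sure event depending only on the Brownian path, a deterministic estimate for that oscillation which holds for all $\epsilon\le\epsilon_0(\omega)$ simultaneously. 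No discretization, no union bound, no Borel--Cantelli, and in particular no need to pass from a countable sequence of $\epsilon$'s to a continuum.

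Your approach via Gaussian tails, grid discretization, union bound and Borel--Cantelli does give the bound along a dyadic sequence $(\epsilon_n)$, and you correctly identify the real difficulty: interpolating in $\epsilon$. That step is not as innocuous as ``ample room in the slack $\tilde\gamma-\gamma$'' suggests. A computation gives $\|\partial_\epsilon\rho^\epsilon\|_{L^2}^2=O(\epsilon^{-\gamma-2})$, so the variance of $\partial_\epsilon\xi^\epsilon(t)$ scales like $\epsilon^{-\gamma-2}$, two powers of $\epsilon$ worse than the variance of $\xi^\epsilon$ itself; controlling $\sup_{t,\epsilon}|\partial_\epsilon\xi^\epsilon|$ on $[0,T]\times[\epsilon_{n+1},\epsilon_n]$ is itself a chaining/supremum problem of the same kind. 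Likewise, the modulus of continuity in $t$ between grid points that you invoke relies on $\sup_t|\dot\xi^\epsilon(t)|$, which is exactly the content of the \emph{next} proposition in the paper and is not free in your framework either. None of this is unfixable, but it amounts to two further chaining arguments grafted onto the first. The paper's pathwise argument sidesteps all of it and yields the uniform-in-$\epsilon$ statement in one stroke; I'd recommend adopting it.
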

\begin{proof}
One can write
\[
 \xi^\epsilon(t)= \int_{-\infty}^\infty \frac{\mathrm{d}}{\mathrm{dt}}\rho^\epsilon(t-s)W(s) \mathrm{ds}= \int_{-\infty}^\infty \rho^\epsilon(t-s)\mathrm{d}W(s) \quad \text{a.s.},
\]
where the first equality follows from differentiating under the integral and the second from stochastic integration by parts. Then properties (i) and (ii) follow from standart properties of the stochastic integral. To see (iii) write
\begin{align*}
  |\xi^\epsilon(t)|&= \Bigl|\int_{t-\epsilon^\gamma}^{t+\epsilon^\gamma}\epsilon^{-2 \gamma} \rho'\left(\frac{t-s}{\epsilon^\gamma}\right)W(s) \mathrm{ds} \Bigr|\\
&\leq \Bigl| \epsilon^{-2 \gamma} \int_{t-\epsilon^\gamma}^{t+\epsilon^\gamma}\rho'\left(\frac{t-s}{\epsilon^\gamma}\right)W(t) \mathrm{ds} \Bigr|+ \Bigl|\epsilon^{-2 \gamma}  \int_{t-\epsilon^\gamma}^{t+\epsilon^\gamma}\rho'\left(\frac{t-s}{\epsilon^\gamma}\right)(W(t)-W(s)) \mathrm{ds} \Bigr|.
\end{align*}
The first term vanishes due to $\int_{t-\epsilon^\gamma}^{t+\epsilon^\gamma}\rho'\left(\frac{t-s}{\epsilon^\gamma}\right)ds=0$. One obtains
\begin{align*}
  |\xi^\epsilon(t)| &\leq \Bigl|\epsilon^{-2 \gamma}  \int_{t-\epsilon^\gamma}^{t+\epsilon^\gamma}\rho'\left(\frac{t-s}{\epsilon^\gamma}\right)(W(t)-W(s)) \mathrm{ds} \Bigr|\\
&\leq \Bigl|\epsilon^{-2 \mathrm{\gamma}}  2 \epsilon^\gamma \| \rho'\|_\infty \text{osc}_{s \in [t-\epsilon^\gamma,t+\epsilon^\gamma]}W(s)  \Bigr|.
\end{align*}
The oscillation is defined as $\text{osc}_{s \in [t-\epsilon^\gamma,t+\epsilon^\gamma]}W(s):=\sup_{s \in [t-\epsilon^\gamma,t+\epsilon^\gamma]} W(s)-\inf_{s \in [t-\epsilon^\gamma,t+\epsilon^\gamma]} W(s)$. 

\vskip2ex

Now one can apply L\'evy's well known result on the modulus of continuity of  Brownian paths (See e.g. \cite{KS91} Theorem 9.25 on page 114):
\[
 \PP \Bigl[\limsup_{\delta\rightarrow 0} \frac{1}{g(\delta)} \underset{t-s \leq \delta} {\max_{0 \leq s < t \leq T}} |W(t)-W(s)|=1  \Bigr]=1,
\]
where the modulus of continuity is given by $g(\delta)= \sqrt{2 \delta \log(\frac{1}{\delta})}$. In particular there exists almost surely a (random!) $\epsilon_0$ such that for $\epsilon \leq \epsilon_0$ we have $\sup_{t \in [0,T]} \text{osc} _{s \in [t-\epsilon^\gamma, t+\epsilon^\gamma]} W(s) \leq (2\epsilon^{\gamma})^{\frac{1}{2}-\frac{\tilde{\gamma}-\gamma}{2\gamma}}$. This gives the desired estimate
\[
 |\xi^\epsilon(t)| \leq \epsilon^{-\gamma} 2\|\rho' \|_{\infty}(2\epsilon^{\gamma})^{\frac{1}{2}-\frac{\tilde{\gamma}-\gamma}{2\gamma}}\leq C \epsilon^{-\tilde{\gamma}/2}.
\]
\end{proof}
We will need a similar bound on the derivatives of $\xi^\epsilon$
\begin{prop}
Consider  the process $\dot{\xi}^\epsilon(t)$. Then if $\gamma < \tilde{\gamma}$  for every positive time $T$ there exists a constant $C$ such that
\[
 \PP\Bigl[ \exists \epsilon_0 \quad \forall \epsilon \leq \epsilon_0 \sup_{0 \leq t \leq T}|\dot{\xi}^\epsilon(t)| \leq C\epsilon^{-\frac{3\tilde{\gamma}}{2}} \Bigr] =1.
\]
In particular for $\gamma< \frac{2}{3}$ and $\epsilon$ small enough 
\begin{equation}\label{noi2}
|\dot{\xi}^\epsilon(t)| \leq C\epsilon^{-1}. 
\end{equation}
\end{prop}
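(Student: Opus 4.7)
The plan is to repeat the argument of the previous proposition almost verbatim, simply taking one additional time derivative. I would start from the representation
\[
\dot\xi^\epsilon(t)=\int_{-\infty}^\infty\frac{d^2}{dt^2}\rho^\epsilon(t-s)\,W(s)\,ds=\epsilon^{-3\gamma}\int_{t-\epsilon^\gamma}^{t+\epsilon^\gamma}\rho''\!\left(\frac{t-s}{\epsilon^\gamma}\right)W(s)\,ds,
\]
where the only new feature relative to the computation for $\xi^\epsilon$ is the larger prefactor $\epsilon^{-3\gamma}$, i.e.\ a loss of one factor of $\epsilon^{-\gamma}$.

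The crucial cancellation is
\[
\int_{t-\epsilon^\gamma}^{t+\epsilon^\gamma}\rho''\!\left(\frac{t-s}{\epsilon^\gamma}\right)ds=\epsilon^\gamma\bigl[\rho'(-1)-\rho'(1)\bigr]=0,
\]
which holds because $\rho\in C^\infty_c([-1,1])$ forces $\rho'(\pm 1)=0$. Exactly as in the proof for $\xi^\epsilon$, this allows me to replace $W(s)$ by $W(s)-W(t)$ inside the integrand and dominate pointwise by the oscillation of $W$ on $[t-\epsilon^\gamma,t+\epsilon^\gamma]$, yielding
\[
|\dot\xi^\epsilon(t)|\le 2\epsilon^{-2\gamma}\|\rho''\|_\infty\,\osc_{s\in[t-\epsilon^\gamma,t+\epsilon^\gamma]}W(s).
\]

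I would then invoke L\'evy's modulus of continuity in the same form as already used: almost surely, for $\epsilon\le\epsilon_0(\omega)$,
\[
\sup_{0\le t\le T}\osc_{s\in[t-\epsilon^\gamma,t+\epsilon^\gamma]}W(s)\le(2\epsilon^\gamma)^{1/2-(\tilde\gamma-\gamma)/(2\gamma)}.
\]
Substituting gives $|\dot\xi^\epsilon(t)|\le C\epsilon^{-\gamma-\tilde\gamma/2}$, which is dominated by $C\epsilon^{-3\tilde\gamma/2}$ since $\tilde\gamma>\gamma$ and $\epsilon<1$. Choosing $\tilde\gamma\in(\gamma,2/3)$ when $\gamma<2/3$ then forces $-3\tilde\gamma/2>-1$ and recovers (\ref{noi2}). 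There is no real obstacle here: the argument is structurally identical to the preceding proof, the only arithmetic point being that each additional time derivative costs a factor $\epsilon^{-\gamma}$, and the only thing worth verifying is the vanishing of $\int\rho''$ over $[-1,1]$, which is immediate from the compact support.
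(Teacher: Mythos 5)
Your proof is correct and follows the paper's argument exactly: the $\rho''$ representation, the cancellation $\int\rho''=0$ permitting the replacement of $W(s)$ by $W(s)-W(t)$, the bound by the oscillation of $W$, and L\'evy's modulus of continuity. The only cosmetic difference is that you reuse the oscillation exponent $\tfrac12-\tfrac{\tilde\gamma-\gamma}{2\gamma}$ from the previous proposition and then dominate the resulting $\epsilon^{-\gamma-\tilde\gamma/2}$ by $\epsilon^{-3\tilde\gamma/2}$, whereas the paper tunes the exponent to $\tfrac12-\tfrac{3(\tilde\gamma-\gamma)}{2\gamma}$ to land on $\epsilon^{-3\tilde\gamma/2}$ directly; you also make explicit the verification $\rho'(\pm1)=0$ that the paper leaves implicit behind its remark that the argument is ``similar to the one above''.
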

\begin{proof}
The proof is similar to the one above:
\begin{align*}
|\dot{\xi}^\epsilon(t)| &\leq \Bigl|\int_{t-\epsilon^\gamma}^{t+\epsilon^\gamma}\epsilon^{-3 \gamma} \rho''\left(\frac{t-s}{\epsilon^\gamma}\right)W(s) \mathrm{ds} \Bigr|\\
&\leq \Bigl|\int_{t-\epsilon^\gamma}^{t+\epsilon^\gamma}\epsilon^{-3 \gamma} \rho''\left(\frac{t-s}{\epsilon^\gamma}\right)(W(t)-W(s)) \mathrm{ds} \Bigr|\\
&\leq 2 \epsilon^{-2 \gamma} \| \rho'' \|_\infty \text{osc}_{s \in [t-\epsilon^\gamma,t+\epsilon^\gamma]}W(s):
\end{align*}
Then one applies L\'evy's modulus of continuity again to see that almost surely for $\epsilon\leq \epsilon_0(\omega)$ one has $\text{osc}_{s \in [a,b]}W(s)\leq (2\epsilon^{\gamma})^{\frac{1}{2}-\frac{3\tilde{\gamma}-3\gamma}{2\gamma}}$ and obtains the desired result:
\[
 |\dot{\xi}^\epsilon(t)| \leq 2 \epsilon^{-2 \gamma} \| \rho'' \|_\infty (2\epsilon^{\gamma})^{\frac{1}{2}-\frac{\tilde{3\gamma}-3\gamma}{2\gamma}} =C \epsilon^{\frac{3\tilde{\gamma}}{2}}.
\]
\end{proof}

\vskip2ex

{\em 3. Motivation and related works:} Solutions of the Allen-Cahn equation 
\[
 \frac{\partial u}{\partial t}=\Delta u  + \frac{1}{\epsilon^2}f(u)
\]
evolve according to the $L^2$ gradient flow of the real Ginzburg-Landau energy functional: 
\[
 \mathcal{H}^\epsilon(u)= \int | \nabla u|^2+ \frac{1}{\epsilon^2}F(u).
\]
There are two different effects. The reaction term $\epsilon^{-2}f(u)$ pushes solutions to the two minima $\pm 1$ and the diffusion term $\Delta u$ tends to smoothen the solution. For small $\epsilon$ there will be two phases, corresponding to regions where the solution is close to $\pm 1$. The width of the transition layer between those two phases is of the order $O(\epsilon)$. Then the evolution gradually shrinks the transition layer.  
\vskip2ex
This behavior is the motivation to consider the Allen-Cahn equation as a simple model of a two phase system which is driven by the surface energy without conservation of mass. Allen and Cahn \cite{AC79} introduced it to model the interface motion between different cristaline structures in alloys.  In the deterministic setting there were major advances in connection with the improved understanding of the theory of geometric flows of surfaces as initiated for example by \cite{ES91, CGG91} in the early nineties. In particular in \cite{ESS92} it was shown that in the limit $\epsilon \downarrow 0$ solutions only attain the values $\pm 1$ and the phase boundary evolves according to motion by mean curvature. The key difficulty here is to find a description of the geometric evolution which is global in time. A similar result for short times was established in \cite{MS95}.
\vskip2ex
Stochastic perturbations of this effect have also been considered. From a modelling point of view an additional noise term can account for inaccuracies of the simplified model or as effects of thermal perturbations. From a mathematical point of view it is a very interesting and challenging question to study stochastically perturbed evolutions of surfaces and the Allen-Cahn setup is one possible point of view. In \cite{Fu95} Funaki considered the case of the Allen-Cahn equation in one space dimension with a space-time white noise. He showed that in the limit $\epsilon \downarrow 0$ on the right time-scale solutions only attain values $\pm 1$ and the boundary point essentially performs a Brownian motion. In \cite{Fu99} he studies the two dimensional case with a smoothened noise and shows that for short times solutions evolve according to a stochastically perturbed motion by mean curvature. His analysis relies on a comparison theorem which requires the noise to be smooth and a very subtle analysis of a quasi-linear stochastic PDE which describes the boundary evolution. On the level of stochastic surface evolution there were advances by Yip \cite{Y98} and Dirr, Luckhaus and Novaga \cite{DLN01} but a fully satisfactory description is not yet available. Some results based on a stochastic version of the concept of viscosity solutions were announced in \cite{LS98}. Recently the model has enjoyed an increasing interest in the numerical analysis community. For example in \cite{KKL07} numerical approximations of the one-dimensional equation are studied. Numerical analysis of this equation is challenging because all the interesting dynamics happen on a very thin layer which requires to develop adaptive methods which work in the stochastic setting.
\vskip2ex
Our result is a generalization of Funaki's result to arbitrary dimension. We use the same comparison technique to study the equation. Therefore we also need to assume a smoothened noise with correlation length going to zero as $\epsilon$ goes to zero. The description of the surface and the convergence result is based on \cite{DLN01} and fully avoids Funaki's result of weak convergence. In fact this is also a strictly pathwise  result so that all results hold almost surely. 
\vskip2ex
{\em 4. Structure of the paper:}  In Section 2 the technique of \cite{DLN01} to describe motion by mean curvature is briefly reviewed and the main results are stated. In Section 3 the results about the geometric flow are used to proof the behavior of the Allen-Cahn equation.\\
\vskip2ex
{\bf Acknowledgement:} The author expresses his sincere gratitude to Tadahisa Funaki for the great hospitality he received at the University of Tokyo. He also thanks the referee for careful reading and various suggestions.

\section{Stochastic motion by mean curvature}\label{defmbmc}
This section reviews the description of a stochastically perturbed motion by mean curvature given in \cite{DLN01}. A short time existence result for surfaces moving with normal velocity $\mathrm{d} V= (n-1) \kappa \mathrm{dt} + c \mathrm{d}W(t)$, where $\kappa$ denotes the mean curvature, and a pathwise stability result under approximations of the integrated noise are given.
\vskip2ex
Motivated by \cite{ES92} consider the following system
\begin{align}\label{SMBMC1}
\nonumber \mathrm{d} d(x,t)&=g(D^2d(x,t), d(x,t)) \mathrm{dt}+ \mathrm{d} W(t) \qquad &&(x,t) \in \mathcal{O} \times (0,T)\\
|\nabla d|^2&=1 \qquad &&(x,t) \in \partial \mathcal{O} \times (0,T)\\
\nonumber d(x,0)&=d_0(x) \qquad && x \in \mathcal{O},   
\end{align}
on some open bounded domain $\mathcal{O}$. Here $D^2 d$ denotes the Hessian of $d$ and $g(A,q)= \text{tr} ( A(I-qA)^{-1})$ for a symmetric matrix $A$ and $q \in \R$. The initial condition $d_0$ is supposed to be of class $C^{2, \alpha}$ and to verify $|\nabla d|=1$ in $\mathcal{O}$. Furthermore it is assumed that $\nabla d$ is nowhere tangent to the boundary.
\vskip2ex
In order to solve the above system consider $q(x,t)= d(x,t)-W(t)$. Then $q$ solves the system 
\begin{align}\label{SMBMC2}
\nonumber \mathrm{d}q(x,t) &=g(D^2q(x,t), q(x,t)+W(t))\mathrm{dt} \qquad &&(x,t) \in \mathcal{O} \times (0,T)\\
|\nabla q|^2&=1 \qquad &&(x,t) \in \partial \mathcal{O} \times (0,T)\\
\nonumber q(x,0)&=d_0(x) \qquad && x \in \mathcal{O}.   
\end{align}
Due to maximal regularity of the linearized system (\cite{L95}) and a fix point argument the following results are obtained:
\begin{thm}\label{Thm2.1}{\em (\cite{DLN01} Section 4)}
Let $t \mapsto W(t)$ be $\alpha$-H\"older continuous for some $\alpha \in (0,1)$. Then there exists a time $T$ depending only on the $C^{\alpha/2}$-norm of $W$ and the $C^{2,\alpha}$-norm of $d_0$ such that on $\mathcal{O} \times [0,T]$ system (\ref{SMBMC2}) and therefore also (\ref{SMBMC1})  admit a unique solution of class $C^{1 + \alpha/2, 2+ \alpha}$. Moreover if $t\mapsto \tilde{W}(t)$ is another function of class $C^{\alpha}$ and $\tilde{q}$ is the solution (\ref{SMBMC2}) with $W$ replaced by $\tilde{W}$with interval of existence $[0, \tilde{T}]$  on has
\begin{equation}\label{stab1}
\sup_{t \in [0, \min \{ \tilde{T},T \}]} \|q(t,\cdot) - \tilde{q}(t,\cdot)\|_{C^{2, \alpha}} \leq C \|W - \tilde{W} \|_{C^{\alpha/2}([0, \min \{ \tilde{T},T \}])} .
\end{equation}
\end{thm}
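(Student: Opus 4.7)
The plan is to set up a Banach fixed-point iteration for (\ref{SMBMC2}) in the parabolic Hölder space $C^{1+\alpha/2,\,2+\alpha}(\mathcal{O}\times[0,T])$ and to read off (\ref{stab1}) from the same maximal parabolic regularity estimate applied to the linearized equation for the difference. The crucial structural observation is that $W$ enters (\ref{SMBMC2}) only as a lower-order argument of $g$ --- no derivatives of $W$ appear --- so the equation is quasilinear of second order and the $C^{\alpha/2}$-regularity of $W$ passes directly into the linear source term. This is what makes it possible to state the existence time in terms of the $C^{\alpha/2}$-norm of $W$ alone.

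First I would linearize at $d_0$. The nonlinearity $g(A,q)=\mathrm{tr}(A(I-qA)^{-1})$ is real-analytic on a neighborhood of $(D^2 d_0(x),d_0(x))$, since $|\nabla d_0|=1$ makes $d_0$ a local signed distance function and guarantees that $I-d_0 D^2 d_0$ is invertible; the principal part of the linearization is accordingly a uniformly elliptic second order operator with $C^\alpha$ coefficients. The boundary condition $|\nabla q|^2=1$ linearizes to the oblique derivative $2\nabla d_0 \cdot \nabla q = 2$, and the standing assumption that $\nabla d_0$ is nowhere tangent to $\partial\mathcal{O}$ is precisely the Lopatinskii--Shapiro condition for the linearized parabolic system. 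This puts me squarely in the framework of Lunardi \cite{L95}, which supplies maximal $C^{1+\alpha/2,2+\alpha}$-regularity for the resulting inhomogeneous linear problem.

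Next I would fix $\delta>0$ small and, for $T$ to be chosen, work on the closed ball
\[
X_T = \{\, p \in C^{1+\alpha/2,2+\alpha}(\mathcal{O}\times[0,T]) \,:\, p(\cdot,0) = d_0,\ \|p-d_0\|_{C^{1+\alpha/2,2+\alpha}} \leq \delta \,\},
\]
and define $\Phi \colon X_T \to C^{1+\alpha/2,2+\alpha}$ by letting $\Phi(p)=q$ solve the linear problem with coefficients frozen at $p$, forcing $g(D^2 p, p+W)$ minus the absorbed linear terms, initial datum $d_0$, and the linearized oblique boundary condition. Maximal regularity combined with the standard small-time gain $\|f\|_{C^{\alpha/2}([0,T])} \leq T^\beta \|f\|_{C^{\alpha/2+\beta}}$ for functions vanishing at $t=0$ yields a bound of the form
\[
\|\Phi(p)-d_0\|_{C^{1+\alpha/2,2+\alpha}} \leq C\bigl(\|d_0\|_{C^{2+\alpha}} + \|W\|_{C^{\alpha/2}} + T^\beta\|p-d_0\|_{C^{1+\alpha/2,2+\alpha}}\bigr)
\]
for some $\beta > 0$, together with an analogous contraction estimate for differences $\Phi(p_1)-\Phi(p_2)$ using the local Lipschitz character of $g$. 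Taking $T$ small in terms only of $\|W\|_{C^{\alpha/2}}$ and $\|d_0\|_{C^{2+\alpha}}$ makes $\Phi$ a self-map and a contraction on $X_T$, and its unique fixed point is the desired solution $q$.

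For the stability (\ref{stab1}), I would subtract the two equations: $r = q - \tilde{q}$ solves a linear parabolic problem $\partial_t r = \Ll r + \Ff$ on $[0,\min\{T,\tilde T\}]$ with vanishing initial data, coefficients of class $C^{\alpha/2,\alpha}$ controlled by the fixed-point bounds, a linearized oblique boundary condition, and forcing essentially equal to $g(D^2\tilde q,\tilde q+W)-g(D^2\tilde q,\tilde q+\tilde W)$, whose $C^{\alpha/2,\alpha}$-norm is bounded by $C\|W-\tilde W\|_{C^{\alpha/2}}$. One last application of maximal regularity produces (\ref{stab1}). The hard part will be the bookkeeping in the fixed-point step: extracting a positive power of $T$ from every nonlinear contribution so that the existence time depends only on the two norms in the statement, and checking that the Lopatinskii--Shapiro condition survives uniformly over $p \in X_T$ so that the maximal regularity constant is independent of the base point.
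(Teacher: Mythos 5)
The paper does not prove this theorem itself: it is quoted from Dirr--Luckhaus--Novaga \cite{DLN01}, and the only indication of method given in the present paper is the single line ``Due to maximal regularity of the linearized system (\cite{L95}) and a fix point argument the following results are obtained.'' Your proposal is a correct and detailed unpacking of exactly that route --- linearization at $d_0$, Lunardi's maximal $C^{1+\alpha/2,2+\alpha}$ regularity for the oblique-derivative problem, a small-time Banach fixed point, and a final application of the linear estimate to $q-\tilde q$ --- so it matches the intended approach.
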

Now let $\Sigma_0= \partial U_0$ be as above. In particular $\Sigma_0$ is assumed to be of class $C^{2,\alpha}$. Define the {\em signed distance function} $d_0$ and the indicator $\chi_{\Sigma_0}$ as 
\[
d_0(x)=
\begin{cases}
-\text{dist}(x,\Sigma_0) \qquad &\text{for } x \in U_0\\  
\text{dist}(x,\Sigma_0) \qquad &\text{for } x \in D\setminus U_0
\end{cases}
\]
and
\[
 \chi_{\Sigma_0}(x)= 
\begin{cases}
-1 \qquad &\text{for } x \in U_0\\  
1 \qquad &\text{for } x \in D\setminus U_0.
\end{cases}
\]  
There exists an open environment $\mathcal{O}$ of $\Sigma_0$ such that on $\mathcal{O}$ the function $d_0(x)$ is of class $C^{2,\alpha}$ and $\nabla d$ is nowhere tangent to $\partial \mathcal{O}$. Furthermore on $\mathcal{O}$ it holds $|\nabla d_0|=1$. Then for a given stochastic noise $W(t)$ consider the pathwise solution $d(x,t)$ of (\ref{SMBMC1}) with initial condition $d_0$ on $[0,T(\omega)]$. Define the evolving surfaces $(\Sigma(t), 0 \leq t \leq T(\omega))$ as the zero level sets of $d(x,t)$. Then the following holds:
\begin{thm}{\em ( \cite{DLN01} Section 4)}   
\begin{itemize}
\item[(i)] For every $t$ the function $x \mapsto d(x,t)$ is the signed distance function of $\Sigma(t)$ on $\mathcal{O}$.
\item[(ii)] If $X(0)$ in $\Sigma(0)$. Then up to a stopping time there exists a solution $X(t)$ to the stochastic differential equation
\[
dX(t)= (n-1) \nu(X(t),t) \kappa(X(t),t) \mathrm{d}t + \nu(X(t),t) \mathrm{d}W(t),
\]
with $X(t) \in \Sigma(t)$ almost surely. 
\end{itemize}
\end{thm}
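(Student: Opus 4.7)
The plan is to work throughout with the transformed function $q(x,t) = d(x,t) - W(t)$, for which Theorem~\ref{Thm2.1} supplies a $C^{1+\alpha/2,2+\alpha}$ pathwise solution of (\ref{SMBMC2}). The main task for (i) is to propagate the eikonal equation $|\nabla q|^2 = 1$ from the initial and boundary data into the interior; (ii) then reduces to a careful application of It\^o's formula.

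For (i), set $\phi(x,t) := |\nabla q(x,t)|^2 - 1$. By hypothesis $\phi(\cdot,0)\equiv 0$ on $\mathcal{O}$, and the boundary condition of (\ref{SMBMC2}) gives $\phi\equiv 0$ on $\partial\mathcal{O}\times(0,T)$. The nonlinearity $g(A,r)=\mathrm{tr}(A(I-rA)^{-1})$ is tailored so that on a smooth function $u$ with $|\nabla u|=1$ (which forces $D^2 u\cdot\nabla u=0$), the value $g(D^2 u,u)$ reproduces the mean curvature $(n-1)\kappa$ of the level set $\{u=\text{const}\}$. Differentiating $\partial_t q = g(D^2 q,q+W)$ in $x$ and pairing with $\nabla q$ yields, via this structural identity, a linear parabolic equation for $\phi$ of the form $\partial_t\phi = a^{ij}\partial_{ij}\phi + b^i\partial_i\phi + c\phi$ with coefficients bounded in terms of $\nabla q$ and $D^2 q$. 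Since $\phi$ vanishes on the parabolic boundary, uniqueness (or the maximum principle) for this linear Cauchy-Neumann problem forces $\phi\equiv 0$. Hence $|\nabla d|\equiv 1$ on $\mathcal{O}$; since $\Sigma(t)=\{d(\cdot,t)=0\}$ and the sign of $d$ is preserved by continuity away from the zero set, $d(\cdot,t)$ is the signed distance to $\Sigma(t)$ on $\mathcal{O}$.

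For (ii), (i) gives H\"older regularity of $\nu(\cdot,t):=\nabla d(\cdot,t)$ and of the mean curvature $\kappa(\cdot,t)$ that is good enough for pathwise SDE theory. One obtains a local strong solution $X(t)$ of $dX=(n-1)\nu\kappa\,dt+\nu\,dW$ up to the first exit time from the tubular neighborhood $\mathcal{O}$, which fixes the stopping time. To verify $X(t)\in\Sigma(t)$, I would apply It\^o's formula to $Y(t):=d(X(t),t)=q(X(t),t)+W(t)$: since $q$ is a smooth (pathwise) function of $(x,t)$, the standard formula gives $dq(X,t) = \partial_t q\,dt + \nabla q\cdot dX + \tfrac12 \nu^T D^2 q\,\nu\,dt$. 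On $\Sigma(t)$, $\partial_t q = g(D^2 q,0) = \Delta q = (n-1)\kappa$; the quadratic correction vanishes because differentiating $|\nabla q|^2=1$ in the direction $\nu$ gives $D^2 q\cdot\nu=0$; and the transport term contributes $(n-1)\kappa\,dt + dW$ (with the sign convention that makes $\nabla q\cdot\nu$ opposite to the $dW$ term in $dY$). Adding $dW$ from the transformation $d=q+W$, the resulting differential $dY$ collapses to zero, so $Y(t)\equiv 0$ on the existence interval.

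The main obstacle I anticipate is the sign and cross-variation bookkeeping in (ii): the same Brownian motion $W$ drives both the PDE for $d$ and the SDE for $X$, and the cancellation of the $dW$-coefficient in $dY$ is delicate -- it pins down the orientation of $\nu$ and the sign in front of $\nu\,dW$ in the SDE, and ultimately reflects the geometric meaning of the noise as a uniform vertical shift of the signed distance function. A secondary difficulty in (i) is verifying that the linear parabolic equation derived for $\phi$ is well-posed with the Neumann-type data inherited from $|\nabla q|^2=1$ on $\partial\mathcal{O}$, but here the fact that this boundary condition is exactly $\phi=0$ closes the argument.
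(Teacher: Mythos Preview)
The paper does not prove this theorem at all: it is stated as a citation of results from \cite{DLN01} (Section~4), with no argument given in the present paper. Consequently there is no ``paper's own proof'' against which to compare your proposal.

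That said, your outline is essentially the approach of \cite{DLN01}. For (i), deriving a linear parabolic equation for $\phi=|\nabla q|^2-1$ and invoking uniqueness with zero parabolic boundary data is indeed how the eikonal property is propagated. For (ii), applying It\^o's formula to $q(X(t),t)$ (which is legitimate because $q$ is a pathwise $C^{1+\alpha/2,2+\alpha}$ function, so no It\^o--Wentzell machinery is needed) and then adding back $W$ is the right mechanism. Your flagged ``main obstacle'' is real: with the exterior normal $\nu=\nabla d$ and the convention $\Delta d=(n-1)\kappa$ on $\Sigma(t)$, the computation
\[
d\bigl[q(X(t),t)\bigr]=\Delta d\,\mathrm{d}t+\nu\cdot dX+0
\]
shows that $dY=0$ forces $\nu\cdot dX=-(n-1)\kappa\,\mathrm{d}t-\mathrm{d}W$, so the signs in the displayed SDE are tied to the paper's sign convention for $\kappa$ (which makes a shrinking sphere have $\kappa<0$, consistent with $V=(n-1)\kappa$ in Theorem~\ref{MR}). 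Once that convention is fixed, your cancellation goes through; as written, your sentence ``the transport term contributes $(n-1)\kappa\,dt+dW$ \ldots\ adding $dW$ \ldots\ collapses to zero'' would yield a spurious $2\,dW$ unless you insert the correct sign of $\Delta d$ relative to $(n-1)\kappa$.
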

Here $\nu(x,t)$ denotes the exterior normal vector to $\Sigma(t)$ for $x \in \Sigma(t)$. The last observation justifies to say that the surfaces $\Sigma(t)$ evolve according to stochastic motion by mean curvature. Note that we use the convention that $\kappa=\frac{1}{n-1}\sum_{i=1}^{n-1} \kappa_i$ with the principal curvatures $\kappa_i$ such that the factor $(n-1)$ appears which is not present in \cite{DLN01}.

\section{Construction of sub- and supersolutions}\label{CSS}
In this section the link between the boundary dynamic and the Allen-Cahn equation is established. For a related calculation see \cite{Fu99,CHL97}.
\vskip2ex

In order to construct sub- and supersolutions to (\ref{SAC}) consider the following modification of the reaction term: $f(u, \delta)= f(u)+\delta$. The implicit function theorem implies that there exists an interval $[-\tilde{\delta}_0, \tilde{\delta}_0]$ such that for $\delta \in [-\tilde{\delta}_0, \tilde{\delta}_0]$ there exist two solutions $m_{\pm}(\delta)$ of the equation $f(u,\delta)=0$ which are close to $\pm 1$ and that the mappings $\delta \mapsto m_{\pm}(\delta)$ are smooth. Consider the following auxiliary one dimensional problem 
\begin{align}\label{aux}
 \frac{\partial}{\partial t} u(x,t)&= \frac{\partial^2}{\partial x^2} u(x,t) +f(u(x,t))+ \delta \\
\nonumber u(\pm \infty) &= m_{\pm}(\delta).
\end{align}
A travelling wave solution to (\ref{aux}) is a solution $u(x,t)=m(x -c t)$ with a fixed wavespeed $c$. Finding such a solution is equivalent to finding an appropriate waveshape $m(x,\delta)$ and wavespeed $c(\delta)$ such that
\begin{align}\label{TW}
 m''(x)+c(\delta)m'(x)+ \{ f(m(x))+\delta \}&=0\\
\nonumber m(\pm \infty) &= m_{\pm}(\delta).
\end{align}
 The following properties hold:
\begin{lemme}\label{CHL}{\em(\cite{CHL97} Lemma 3.3)}
There exists a constant $\delta_0$  such that for $\delta \in [-\delta_0,\delta_0]$ problem (\ref{TW}) admits a solution $(m(x, \delta), c(\delta))$  where $m$ is increasing in $x$ and this solution is unique up to translation. Furthermore $m$ can be chosen smooth in $\delta$. There exist constants $A$ and $\beta$ such that the following properties hold:
\begin{itemize}
 \item[(i)] $0 < \partial_x m(x,\delta) \leq A$  for all  $(x,\delta) \in \R \times [ -\delta_0,\delta_0]$.
 \item[(ii)] $|\partial_x m(\pm x,\delta)|+|(\partial_x)^2 m(\pm x,\delta)|+| m(\pm x,\delta) -m_{\pm}(\delta)|\leq A e^{-\beta x}$ for all  $(x,\delta) \in \R_+ \times [ -\delta_0,\delta_0]$.
 \item[(iii)] The traveling wave velocity $c(\delta)$ is smooth in $[-\delta_0,\delta_0]$ and $c(0)=0$.
\end{itemize}
\end{lemme}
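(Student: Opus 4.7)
The plan is to treat $\delta$ as a small perturbation of the explicitly solvable symmetric case $\delta=0$ and combine a phase-plane/shooting argument with the implicit function theorem. Since $F$ is an even double-well potential with minima at $\pm 1$ and $f(\pm 1) = 0$, for $\delta = 0$ the ODE $m'' + f(m) = 0$ admits the well-known monotone heteroclinic $m_0(x) = \tanh(x/\sqrt{2})$ connecting $-1$ and $+1$ with wavespeed $c(0) = 0$. This will serve as the base point around which we perturb.

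The wavespeed for $\delta \neq 0$ is determined by a solvability identity: multiplying (\ref{TW}) by $m'(x)$ and integrating over $\R$ using $m'(\pm\infty) = 0$ gives
\[
c(\delta) \int_\R \bigl(m'(x,\delta)\bigr)^2 \mathrm{dx} = F(m_-(\delta)) - F(m_+(\delta)) + \delta\bigl(m_+(\delta) - m_-(\delta)\bigr).
\]
At $\delta = 0$ the right side vanishes, confirming $c(0) = 0$, and its derivative in $\delta$ at $0$ is $m_+(0) - m_-(0) = 2 \neq 0$, so $c$ picks up a nontrivial first-order correction. Existence of the profile itself will be proved by a shooting argument in the $(m,p) = (m, m')$ phase plane for the system $p' = -c p - (f(m)+\delta)$: the endpoints $m_\pm(\delta)$ are hyperbolic saddles because $f'(m_\pm(0)) = -2 < 0$, so each possesses a one-dimensional unstable/stable manifold depending smoothly on $(c,\delta)$. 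At $(c,\delta) = (0,0)$ these manifolds coincide along $m_0$, and the integral identity above computes the derivative of the connection map with respect to $c$ and shows it is nonzero. The implicit function theorem then produces, for $\delta \in [-\delta_0, \delta_0]$ with $\delta_0$ small, a unique $c(\delta)$ and a unique (up to translation) heteroclinic $m(x,\delta)$, both smooth in $\delta$.

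Once existence is obtained, monotonicity (property (i)) is automatic: along the heteroclinic, $p = m'$ cannot vanish because a zero of $p$ at a non-equilibrium point would trap the orbit at a non-saddle equilibrium of the reduced system; the uniform bound $A$ on $m'$ then follows by continuity of the compact heteroclinic orbits in $\delta$. The exponential decay in (ii) comes from linearizing the ODE at $m_\pm(\delta)$: the characteristic polynomial $\lambda^2 + c(\delta)\lambda - f'(m_\pm(\delta)) = 0$ has one positive and one negative root, with magnitudes bounded below by a constant $\beta > 0$ uniformly in $\delta \in [-\delta_0,\delta_0]$ by continuous dependence. Standard stable/unstable manifold estimates then give the claimed exponential convergence of $m$, $m'$, and $m''$ to their limits, with the constant $A$ absorbing the finite-$x$ parts.

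The main technical obstacle is the transversality at $(c,\delta) = (0,0)$ that underlies the shooting/implicit function argument: one must show that varying $c$ truly breaks the heteroclinic connection at first order, and identify the correct direction in which $c$ must move with $\delta$. This is precisely what the solvability identity supplies, and once transversality is in hand everything else — uniqueness, smooth dependence, and uniform estimates on the profile and its derivatives — follows from standard ODE perturbation theory applied on the compact parameter interval $[-\delta_0,\delta_0]$.
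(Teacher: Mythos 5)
The paper does not prove Lemma \ref{CHL}: it is stated as a direct citation of \cite{CHL97}, Lemma 3.3, and used as a black box, so there is no internal proof to compare your argument against.

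Your sketch nonetheless follows what is likely the same (and in any case the standard) route: phase-plane shooting between the saddles $m_\pm(\delta)$, anchored by the implicit function theorem at the explicit $\delta=0$ heteroclinic $m_0(x)=\tanh(x/\sqrt 2)$, with a Melnikov-type solvability integral supplying transversality in $c$. Two cautions. First, the solvability identity has a sign error: multiplying (\ref{TW}) by $m'$ and integrating over $\R$ gives $c(\delta)\int_\R (m'(x,\delta))^2\,\mathrm{dx} = F(m_+(\delta)) - F(m_-(\delta)) - \delta\bigl(m_+(\delta)-m_-(\delta)\bigr)$; differentiating this at $\delta=0$ yields $-2$, which is what produces the relation $\partial_\delta c(0) = -c_0 < 0$ stated just after the lemma. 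Your version has the opposite sign and would give $c'(0)=+c_0$; this does not affect existence (only nonvanishing of the derivative matters), but it gets the sign of the velocity response wrong. Second, the claim that $p=m'$ cannot vanish because a zero ``would trap the orbit at a non-saddle equilibrium'' is too quick: if $p(x_0)=0$ with $m(x_0)$ not an equilibrium, then $p'(x_0)=-(f(m(x_0))+\delta)\neq 0$ and the orbit simply crosses the $m$-axis transversally, so nothing is trapped. Monotonicity requires either a more careful phase-plane argument or, more cleanly, running the implicit function theorem in an exponentially weighted $C^1$ norm so that $m(\cdot,\delta)$ is uniformly close to $m_0$ together with its first derivative, from which $\partial_x m>0$ follows since $m_0'>0$ with the matching exponential decay rate. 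The exponential estimates and uniform bounds in (i)--(ii) are otherwise correctly deduced from the saddle linearization with eigenvalues bounded away from zero for $\delta\in[-\delta_0,\delta_0]$.
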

Actually as pointed out in \cite{Fu99}  $ \partial_\delta c(0)=-c_0=-\frac{\sqrt{2}}{\int^1_{-1} \sqrt{F(u)} \mathrm{du}}$.
\vskip2ex
The idea of the construction is the following: We expect the surface to evolve according to two influences - the surface tension and the stochastic perturbation of the potential making one of the stable states more attractive. Close to the surface the solution should look like a travelling wave interface which is moving with velocity $c(\epsilon \xi)$. This means that solution should behave like
\[
 u(x,t) \approx m\left(\frac{d(x,t)}{\epsilon}, \epsilon \xi^\epsilon(t) \right),
\]
where $d$ is the signed distance function of a surface moving with normal velocity $ V= (n-1)\kappa + \epsilon^{-1}c(\epsilon \xi^\epsilon)$. The standard way of making this idea rigorous is to modify it in such a way that such an approximate solution is a true sub/supersolution and show that the difference between the two cases evolves on a slower time scale than the original dynamic.
\vskip2ex
Fix some initial surface $\Sigma_0$ as in Theorem \ref{MR}. As $\Sigma_0$ is compactly embedded one can fix an $N$ such that all the principle curvatures of $\Sigma_0$ are bounded by $N$. As in Section \ref{defmbmc} one can define a random evolution $(\Sigma^{\pm,\epsilon}(t), 0 \leq t \leq T_{\epsilon,N}^{\pm})$ evolving with normal velocity
\[
 V=(n-1)\kappa +\epsilon^{-1} c(\epsilon \xi^\epsilon(t) \pm \epsilon^\beta).
\]
Here the stopping time $T_{\epsilon,N}^{\pm}$ is defined as the largest time such that the evolution is well defined and such that on $[0, T_{\epsilon,N}^{\pm}]$ the principle curvatures remain bounded by $N$. The constant $\beta$ can be chosen such that $1< \beta < 2$. The condition $\beta > 1$ ensures that in the original time scale the extra term does not have an effect and the condition $\beta < 2$ ensures that the effect is strong enough for the solution to remain a sub/supersolution. Furthermore assume (by shortening the time interval if necessary) that there exists an open set $\mathcal{O}$ such that for  all $ t \in [0, T_{\epsilon,N}^{\pm}]$ the $\eta$-neighborhood of $\Sigma^{\pm}(t)$ is contained in $\mathcal{O}$ for some small $\eta$. Then one can extend the signed distance functions $d^{\pm}(x,t)$ to a smooth function $\tilde{d}^{\pm}$ on all of $[0,T(\omega)] \times D$ such that on  $U(t) \setminus \mathcal{O}$ the function $\tilde{d}^{\pm}$ is smaller than $-\eta$ and on $D\setminus (U(t) \cup \mathcal{O})$ it is larger than $\eta$, such that $|\nabla \tilde{d}^{\pm}| \leq 1$ and such that $\tilde{d}$ is constant close to $\partial D$.
\vskip2ex
Define
\[
 u^{\pm}(x,t) = m\left(\frac{\tilde{d}^{\pm}(x,t)\pm \epsilon^a e^{c_1 t}}{\epsilon}, \epsilon \xi^\epsilon(t) \pm \epsilon^\beta \right),
\]
where $a$ and $c_1$ are constants that will be chosen below. One gets the following conclusion:
\begin{lemme}\label{subsup}
 If one chooses $a$ and $c_1$ properly, there exists a (random) $\epsilon_0>0$ such that for all $\epsilon\leq \epsilon_0$ and for $0 \leq t \leq T^{\pm,\epsilon}_N$ 
\[
 u^{\epsilon,-}(x,t) \leq u^{\epsilon} (x,t) \leq u^{\epsilon,+}(x,t),
\]
for every solution $u^{\epsilon}(x,t)$ of (\ref{SAC}) with initial data verifying  $u^{\epsilon,-}(x,0) \leq u^{\epsilon} (x,0) \leq u^{\epsilon,+}(x,0)$.
\end{lemme}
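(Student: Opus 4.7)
The plan is to apply the parabolic comparison principle on $[0,T_{\epsilon,N}^{\pm}]$ after showing that $u^{\epsilon,+}$ is a pointwise supersolution and $u^{\epsilon,-}$ a pointwise subsolution of (\ref{SAC}); the Neumann condition at $\partial D$ is automatic because $\tilde{d}^{\pm}$ is constant there. Abbreviating $z^{\pm}=(\tilde{d}^{\pm}\pm\epsilon^{a}e^{c_{1}t})/\epsilon$ and $\delta^{\pm}=\epsilon\xi^{\epsilon}(t)\pm\epsilon^{\beta}$, I would expand
\[
 \Ll u:=\partial_t u-\Delta u-\epsilon^{-2}f(u)-\epsilon^{-1}\xi^{\epsilon}(t)
\]
applied to $u^{\pm}=m(z^{\pm},\delta^{\pm})$ by the chain rule and use three cancellations: the travelling-wave identity $m_{zz}+c(\delta^{\pm})m_{z}+f(m)+\delta^{\pm}=0$ from (\ref{TW}) combined with $|\nabla\tilde{d}^{\pm}|=1$ on the tubular neighbourhood $\mathcal{O}$ removes the $O(\epsilon^{-2})$ reaction term; the decomposition $\delta^{\pm}/\epsilon^{2}=\epsilon^{-1}\xi^{\epsilon}\pm\epsilon^{\beta-2}$ cancels the noise source and leaves the signed correction $\pm\epsilon^{\beta-2}$; and the geometric evolution equation for $d^{\pm}$ from Theorem \ref{Thm2.1}, namely $\partial_t d^{\pm}-\Delta d^{\pm}+\epsilon^{-1}c(\delta^{\pm})=0$ on $\{\tilde{d}^{\pm}=0\}$, cancels the $O(\epsilon^{-1})$ velocity--curvature balance up to an $O(\tilde{d}^{\pm})$ defect coming from the level-set expansion $\Delta\tilde{d}^{\pm}=\sum_{i}\kappa_{i}/(1-\tilde{d}^{\pm}\kappa_{i})$. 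The residual then takes the schematic form
\[
 \Ll u^{\pm}=\pm c_{1}\epsilon^{a-1}e^{c_{1}t}m_{z}\;\pm\;\epsilon^{\beta-2}\;+\;\epsilon\,\dot\xi^{\epsilon}\, m_{\delta}\;+\;R^{\pm},
\]
where $R^{\pm}$ gathers the curvature defect and the contributions from the extension of $\tilde{d}^{\pm}$ outside $\mathcal{O}$.

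Next I would estimate each piece using the noise bounds and Lemma \ref{CHL}. The estimate $|\delta^{\pm}|\leq C(\epsilon^{2/3}+\epsilon^{\beta})\leq\delta_{0}$ from (\ref{noi1}) makes Lemma \ref{CHL} applicable throughout, and together with (\ref{noi2}) yields $|m_{\delta}\epsilon\dot\xi^{\epsilon}|\leq C$. On $\mathcal{O}$, the uniform bound $|z\,m_{z}(z,\delta)|\leq C$ coming from Lemma \ref{CHL}(i)--(ii) and the curvature bound $N$ that defines $T_{\epsilon,N}^{\pm}$ control the curvature defect by an $m_{z}$-weighted quantity of size $O(1)$. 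Outside $\mathcal{O}$, where $|\tilde{d}^{\pm}|\geq\eta$, Lemma \ref{CHL}(ii) gives $m_{z}\leq Ae^{-\beta\eta/\epsilon}$, so those contributions are exponentially small in $\epsilon$.

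It then remains to match cushions to errors. Since $\beta<2$ the cushion $\pm\epsilon^{\beta-2}$ diverges with the correct sign to absorb the $O(1)$ pieces of $R^{\pm}$ that are not weighted by $m_{z}$. The $m_{z}$-weighted cushion $\pm c_{1}m_{z}\epsilon^{a-1}e^{c_{1}t}$, with $a<1$ so that $\epsilon^{a-1}\to\infty$, absorbs the $m_{z}$-weighted errors concentrated in the $O(\epsilon)$ tube around $\Sigma^{\pm,\epsilon}(t)$, once $c_{1}$ is chosen as a sufficiently large deterministic constant depending only on $N$, on the constants in Lemma \ref{CHL}, and on the pathwise bound in (\ref{noi2}). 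This yields $\Ll u^{+}\geq 0$ and $\Ll u^{-}\leq 0$ on $[0,T_{\epsilon,N}^{\pm}]$ for $\epsilon\leq\epsilon_{0}(\omega)$, and the parabolic comparison principle for (\ref{SAC}) delivers the two-sided bound on $u^{\epsilon}$.

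The main obstacle, as in the deterministic matched-asymptotic construction of \cite{CHL97} and its two-dimensional stochastic analogue in \cite{Fu99}, is the bookkeeping of $R^{\pm}$: each error piece must be paired with a cushion of the matching weight type, and the two cushion exponents $a-1$ and $\beta-2$ must remain compatible with the prescribed range $1<\beta<2$ and with the pathwise bounds $|\xi^{\epsilon}|\leq C\epsilon^{-1/3}$ and $|\dot\xi^{\epsilon}|\leq C\epsilon^{-1}$ in (\ref{noi1})--(\ref{noi2}), without forcing $c_{1}$ to depend on $\omega$. Ensuring this compatibility is exactly where the assumption $\gamma<2/3$ is indispensable.
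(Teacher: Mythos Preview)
Your proposal is correct and follows essentially the same route as the paper's own proof: expand $\mathcal{L}u^{\pm}$ by the chain rule, use the travelling-wave identity (\ref{TW}) to cancel the $O(\epsilon^{-2})$ reaction term, the signed-distance evolution to cancel the $O(\epsilon^{-1})$ velocity--curvature balance, the bound (\ref{noi2}) for the $m_\delta\,\epsilon\dot\xi^{\epsilon}$ piece, and the exponential tails of $m_x$ outside the tube, then conclude by the comparison principle. The only organisational difference is that the paper does not treat $c_1\epsilon^{a-1}e^{c_1 t}m_x$ as a second cushion: it groups that term together with the curvature defect into a single quantity $I_1$ and bounds $|I_1|\leq C$ directly via $\sup_{z}|z\,m_x(z)|<\infty$ and the choice $c_1>N^2$, so that the signed $\epsilon^{\beta-2}$ alone absorbs all the $O(1)$ errors; your two-cushion bookkeeping (the $m_z$-weighted cushion with $a<1$ plus the constant cushion $\epsilon^{\beta-2}$) is the more traditional Chen--Hilhorst--Logak arrangement and works equally well.
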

\begin{proof}
The conclusion will follow by a PDE-comparison principle. We only show the inequality involving $u^+$ the other one being similar. Let us calculate
\begin{align*}
\partial_t u^{\epsilon,+}(x,t)&= \frac{m_x}{\epsilon}\Bigl(\partial_t \tilde{d}(x,t) + \epsilon^a c_1 e^{c_1 t}  \Bigr) +\epsilon m_\delta \dot{\xi}(t)\\
\Delta u^{\epsilon}(x,t)&=\frac{m_x}{\epsilon} \Delta \tilde{d}(x,t) + \frac{m_{xx}}{\epsilon^2}|\nabla \tilde{d}(x,t)|^2.
\end{align*}
Here $m_x$ denotes the partial derivative of $m(x,t)$ with respect to $x$. Then rewrite the reaction term using (\ref{TW}):
\[
\epsilon^{-2} ( f(m)+ \epsilon \xi^\epsilon)= \epsilon^{-2} \Bigl(-m''- m' c(\epsilon \xi^\epsilon+\epsilon^\beta) -\epsilon^\beta  \Bigr).
\]
By properly arranging the terms one gets
\begin{align*}
 \mathcal{L}(u^+):= \partial_t u^{\epsilon,+}(x,t)-\Delta u^{\epsilon}(x,t) -\epsilon^{-2} ( f(m(x,t))+ \epsilon \xi^\epsilon(t))= I_1 +I_2 +I_3+\epsilon^{\beta-2},
\end{align*}
where 
\begin{align*}
I_1 &=\frac{m_x}{\epsilon}\Bigl(\partial_t \tilde{d}(x,t) + \epsilon^a c_1 e^{c_1 t}-\Delta \tilde{d}(x,t)   +\epsilon^{-1} c(\epsilon \xi^\epsilon(t) + \epsilon^\beta) \Bigr)\\
I_2 &=\epsilon m_\delta \dot{\xi}(t)\\
I_3 &= \frac{m_{xx}}{\epsilon^2}\Bigl(1-|\nabla \tilde{d}(x,t)|^2  \Bigr).
\end{align*}
Here the first term accounts for the boundary motion. The statement that this term is small essentially means that the surface evolves with normal velocity $V= (n-1)\kappa + \epsilon^{-1} c(\epsilon \xi^\epsilon+\epsilon^\beta)$. The second term corresponds to the change of wave profile due to the change of noise. It is here that we need the pathwise bound (\ref{noi2}) on the derivative of $\xi^\epsilon$ to control this term. The third term essentially vanishes because close to $\Sigma(t)$ the function $\tilde{d}$ coincides with $d$ and therefore $|\nabla d|^2=1$. Off the boundary the derivative $m_{xx}$ becomes exponentially small such that we also control this term. In the end this means that the correction term $\epsilon^{\beta-2}$ dominates the dynamic. Let us make these considerations rigorous:\\

By (\ref{noi2}) $I_2 \leq C$ for every $\epsilon$ smaller than $\epsilon_0(\omega)$. For $d(x,t) \leq \eta \quad \nabla d(x,t)=1$ and therefore $I_3$ vanishes for such $x$.  For $d(x,t) \geq \eta$ Lemma \ref{CHL} (ii) implies:
\[
\frac{m_{xx}}{\epsilon^2}\Bigl(1-|\nabla \tilde{d}|^2  \Bigr) \leq  \frac{2A}{\epsilon^2} e^{-C/\epsilon}\rightarrow 0.
\]
To bound $I_1$ consider points $x$ close to $\Sigma(t)$. For all other $x$ the reasoning is as for $I_3$. For $x$ with $\text{dist}(x,\Sigma^{\pm,\epsilon}(t))\leq \frac{1}{2N}$ the functions $d(x,t)$ and $\tilde{d}(x,t)$ coincide and one obtains 
\[
 \partial_t d(x,t)= \Delta d(y,t)+\epsilon^{-1}c(\epsilon \xi^\epsilon(t) + \epsilon^\beta),
\]
where as before $y$ is the unique point in $\Sigma(t)$ such that $d(x,t)=\text{dist}(x,y)$. Plugging this into $I_1$ gives
\[
 I_1=\frac{m_x}{\epsilon}\ \Bigl\{ \Delta d^{\epsilon}(y,t)- \Delta d^{\epsilon}(x,t) +   \epsilon^a c_1 e^{c_1 t}\Bigr\}.
\]
Here one uses the fact that all the principle curvatures $\kappa_i(t,y)$ of the $\Sigma^{\pm,\epsilon}(t)$ are bounded by $N$ to obtain
\begin{align*}
| \Delta d^{\epsilon}(y,t)- \Delta d^{\epsilon}(x,t)|&=\left| \sum_{i=1}^{n-1} \kappa_i(y,t) - \sum_{i=1}^{n-1} \frac{\kappa_i(y,t)}{1-d(x,t) \kappa_i(y,t)}\right| \\
&=\sum_{i=1}^{n-1} |\kappa_i(y,t)| \frac{ d(x,t) |\kappa_i(y,t)|}{1-d(x,t) |\kappa_i(y,t)|} \\
&\leq 4 N^2 d(x,t), 
\end{align*}
because $\sup_{x \in [0, \frac{1}{2}]} \partial_{x} \frac{x}{1-x}=4$. Plugging this in yields
\begin{align*}
|I_1| \leq m_x\left(\frac{\tilde{d}^{\pm}(x,t)\pm \epsilon^a e^{c_1 t}}{\epsilon}, \epsilon \xi^\epsilon \pm \epsilon^\beta  \right) \frac{4 N^2 d(x,t)+\epsilon^a c_1 e^{c_1 t}}{\epsilon}.
\end{align*}
Choosing $c_1$ larger than $N^2$ and using $\sup_x  x m_x < \infty$ one obtains $|I_1| \leq C$. Thus altogether if $\epsilon$ is small enough the term $\epsilon^{\beta-2}$ will dominate everything else and one obtains
\[
\mathcal{L}(u^+) \geq 0.
\]
On the boundary $\frac{\partial u_+}{\partial \nu} =0$ due to the definition of $\tilde{d}$. So a standart comparison principle gives the desired result. The inequality for $u_-$ is shown in a similar manner.
\end{proof}
To finish the proof of the main theorem one needs the following Lemma:
\begin{lemme}\label{lemme2}
 Fix any time interval $[0,T]$. Denote by $W^{\pm,\epsilon}$ the random functions $[0,T] \ni t  \mapsto \frac{1}{c_0}\int_0^t \epsilon^{-1}c(\epsilon \xi^\epsilon(s) \pm \epsilon^\beta)\mathrm{ds} $. Then $c_0 W^{\pm,\epsilon}$ converges almost surely to $t \mapsto c_0 W(t)$ in $C^{0, \alpha}([0,T])$ for every $\alpha < \frac{1}{2}$.
\end{lemme}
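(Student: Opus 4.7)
The plan is to Taylor-expand the travelling wave speed $c$ at $0$, extract the dominant contribution, and control all error terms in the $C^{0,\alpha}$ norm. Since $c$ is smooth near $0$ with $c(0)=0$ and $\partial_\delta c(0)=-c_0$,
\[
\epsilon^{-1} c(\epsilon \xi^\epsilon(s) \pm \epsilon^\beta) = -c_0 \xi^\epsilon(s) \mp c_0 \epsilon^{\beta-1} + R^\epsilon(s),
\]
with $|R^\epsilon(s)| \leq C \epsilon^{-1}(\epsilon \xi^\epsilon(s) \pm \epsilon^\beta)^2$. The pathwise noise bound (\ref{noi1}) gives $|\epsilon \xi^\epsilon(s) \pm \epsilon^\beta| \leq C \epsilon^{2/3} + \epsilon^\beta$, and since $\beta > 1$ this yields $|R^\epsilon(s)| \leq C \epsilon^{1/3}$ uniformly in $s \in [0,T]$ almost surely.

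Integrating and using $\int_0^t \xi^\epsilon(s)\, \mathrm{d}s = W^\epsilon(t) - W^\epsilon(0)$ (since $\xi^\epsilon = \dot W^\epsilon$), one obtains
\[
c_0 W^{\pm, \epsilon}(t) = -c_0 \bigl(W^\epsilon(t) - W^\epsilon(0)\bigr) \mp c_0 t \epsilon^{\beta-1} + \int_0^t R^\epsilon(s)\, \mathrm{d}s.
\]
The deterministic drift $\mp c_0 t \epsilon^{\beta-1}$ is linear in $t$ and has $C^{0,\alpha}$-norm $O(\epsilon^{\beta-1}) \to 0$. The error integral $t \mapsto \int_0^t R^\epsilon(s)\, \mathrm{d}s$ is Lipschitz with constant $O(\epsilon^{1/3})$, hence also vanishes in $C^{0,\alpha}$. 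Everything therefore reduces to showing that $W^\epsilon \to W$ in $C^{0,\alpha}([0,T])$ almost surely, up to the sign convention used to identify the limiting Brownian motion.

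For this last reduction I would write $W^\epsilon(t) - W(t) = \int \rho^\epsilon(t-s)[W(s) - W(t)]\, \mathrm{d}s$; the support of $\rho^\epsilon$ bounds this by $\osc_{[t-\epsilon^\gamma,\, t+\epsilon^\gamma]} W$, and L\'evy's modulus of continuity gives $\|W^\epsilon - W\|_\infty \leq C(\omega)\, \epsilon^{\gamma(1/2 - \eta)}$ almost surely for any $\eta > 0$. Moreover $W \in C^{0,\alpha'}([0,T])$ a.s. for every $\alpha' < 1/2$, and convolution with the probability density $\rho^\epsilon$ gives a uniform-in-$\epsilon$ H\"older bound for $W^\epsilon$. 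The standard interpolation
\[
[f]_{C^{0,\alpha}} \leq C\, \|f\|_\infty^{1 - \alpha/\alpha'}\, [f]_{C^{0,\alpha'}}^{\alpha/\alpha'},
\]
applied to $f = W^\epsilon - W$ with $\alpha < \alpha' < 1/2$, then upgrades uniform convergence to $C^{0,\alpha}$ convergence. I expect this interpolation step to be the only delicate point: the explicit remainder terms are already Lipschitz with vanishing constant, so the whole argument really hinges on the Gaussian regularity of $W$, supplied by Kolmogorov's criterion together with L\'evy's modulus.
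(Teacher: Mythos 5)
Your proof is correct and follows essentially the same approach as the paper: Taylor-expand $c$ at $0$, control the deterministic drift and the quadratic remainder via the pathwise noise bound (both are Lipschitz with vanishing constant), and reduce to the mollification error $W^\epsilon - W$, whose sup-norm is controlled through L\'evy's modulus of continuity applied over the support of $\rho^\epsilon$. The only difference is expository --- you invoke the abstract H\"older interpolation inequality $[f]_{C^{0,\alpha}} \leq C\,\|f\|_\infty^{1-\alpha/\alpha'}\,[f]_{C^{0,\alpha'}}^{\alpha/\alpha'}$, while the paper carries out the same interpolation inline by splitting the integrand $W(t)-W(t-u)-W(s)+W(s-u)$ into factors raised to the powers $\alpha/\vartheta$ and $1-\alpha/\vartheta$.
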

\begin{proof}
Consider only $W^{+,\epsilon}(t)$ the calculation for $W^{-,\epsilon}(t)$ being the same. Fix $\alpha< \frac{1}{2}$ and a $\vartheta$ with $\alpha< \vartheta < \frac{1}{2}$. Then for $\PP$-almost every $\omega$ there exists a random constant $C$ such that 
\[
\sup_{-1 \leq s < t \leq T}\frac{|W(s)-W(t)|}{|s-t|^\vartheta} \leq C.
\]
Assume that $\epsilon$ is small enough to ensure $\epsilon \xi^\epsilon(t)+\epsilon^\beta \in [-\delta_0,\delta_0]$. (Recall that $c$ is only defined on $[-\delta_0,\delta_0]$.) Using Taylor-formula and $c(0)=0$ one can write for every $t$:
\[
\epsilon^{-1}c(\epsilon \xi^\epsilon(t) + \epsilon^\beta)=  c'(0) (\xi^\epsilon(t) + \epsilon^{\beta-1}) + \frac{1}{2} c''(a(t)) \epsilon^{-1} \bigl(\epsilon \xi^\epsilon(t) + \epsilon^\beta \bigr)^2,
\]
for some   $a(t)$ verifying $|a(t)| \leq |\epsilon \xi^\epsilon(t) + \epsilon^\beta|$.
Therefore one can write
\begin{align*}
\| c_0 W - c_0 W^{+,\epsilon} \|_\infty \leq& \sup_{s \in [0,T]} |c_0 W(s) - c_0 \int_0^s  \xi^\epsilon(t)dt | + c_0 T \epsilon^{\beta-1} \\
&+ T \sup_{\delta \in [- \delta_0, \delta_0]} |c''(\delta)| \Bigl(\sup_{s\in[0,T]}  \epsilon (\xi^\epsilon(s))^2 + \epsilon^{2\beta-1}  \Bigr).
\end{align*}
Due to (\ref{noi1}) the last terms converge to zero almost surely.  Therefore it remains to consider the first term. Due to $\dot{W}^\varepsilon(s)=\xi^\varepsilon(s)$ one obtains:
\begin{align*}
\sup_{s \in [0,T]} |c_0 W(s) &- c_0 \int_0^s  \xi^\epsilon(t)dt | \leq c_0 \sup_{s \in [0,T]} | W(s) - W(s)^\epsilon| + c_0 |W(0)^\epsilon | \\
&= c_0 \sup_{s \in [0,T]} \left| \int_{-\epsilon^\gamma}^{\epsilon^\gamma}\Bigl(W(s)-W(s-t)\Bigr) \rho^\epsilon(t)dt \right|+ c_0 \left| \int_{-\epsilon^\gamma}^{\epsilon^\gamma}\Bigl(W(0)-W(t)\Bigr) \rho^\epsilon(t)dt \right|\\
&\leq 2 c_0 C \Bigl(    \epsilon^\gamma \Bigr)^{\vartheta}  \rightarrow 0.
\end{align*}
Consider now the H\"older-seminorm
\begin{align*}
& \sup_{0 \leq s < t \leq T}\frac{1}{(t-s)^\alpha} \Bigl|c_0 W(t)- c_0 W^{+,\epsilon}(t) - c_0 W(s) + c_0 W^{+,\epsilon}(s)\Bigr|\\
&\quad =  \sup_{0 \leq s < t \leq T}\frac{1}{(t-s)^\alpha} \Bigl|c_0 W(t)-c_0 W(s) -\int_s^t \epsilon^{-1}c(\epsilon \xi^\epsilon(s) + \epsilon^\beta)\mathrm{ds}  \Bigr|\\
&\quad \leq \sup_{0 \leq s < t \leq T}\frac{1}{(t-s)^\alpha}\left| c_0 W(t) -c_0 W(s) - c_0 W^\epsilon(t) + c_0 W^\epsilon(s) \right| \\
& \qquad + \sup_{0 \leq s < t \leq T}\frac{1}{(t-s)^\alpha}\left|\int_s^t c_0 \epsilon^{\beta-1}  +  \sup_{\delta \in [- \delta_0, \delta_0]} |c''(\delta)| \Bigl(  \epsilon (\xi^\epsilon(u))^2 + \epsilon^{2\beta-1}  \Bigr) du \right|.
\end{align*}
Again the second term converges to zero. For the first term one gets:
\begin{align*}
& \sup_{0 \leq s < t \leq T}\frac{1}{(t-s)^\alpha}|c_0 W(t) -c_0 W(s) - c_0 W^\epsilon(t) + c_0 W^\epsilon(s)|\\ 
& \quad \leq c_0 \sup_{0 \leq s < t \leq T}\frac{1}{(t-s)^\alpha} \int_{-\epsilon^\gamma}^{\epsilon^\gamma}  \bigl( W(t)-W(t-u) -W(s) + W(s-u) \bigr) \rho^\epsilon(u) du\\
& \quad \leq c_0  \sup_{0 \leq s < t \leq T} \frac{(2(W(t)-W(s)))^{\frac{\alpha}{\vartheta}}}{(t-s)^\alpha} \int_{-\epsilon^\gamma}^{\epsilon^\gamma}   \bigl(W(t)-W(t-u) -W(s) + W(s-u) \bigr)^{1-\frac{\alpha}{\vartheta}} \rho^\epsilon(u) du\\
& \quad \leq c_0 \left( 2 C \right)^{\frac{\alpha}{\gamma}} \left( 2C (2\epsilon^\gamma)^\vartheta \right)^{1 -\frac{\alpha}{\vartheta}}.
\end{align*}
This shows the desired convergence.
\end{proof}
\begin{proof} (of Theorem (\ref{MR}))
Chose the initial configurations $u^\epsilon_0$ such that $u^{\epsilon}(x,0) \leq u^\epsilon_0(x) \leq u^{\epsilon,+}(x,0)$.  Define the stopping time $\tau(\omega):= \inf_{\epsilon} T^{\pm}_{\epsilon,N}$. Remark that $\tau$ is almost surely positive due to the boundedness of the $\| W^{\pm,\epsilon}  \|_{C^{\alpha/2}}$ and the $C^{2,\alpha}$ convergence of $d^{\epsilon,\pm}$ to $d$.

\vskip2ex

Then by Lemma \ref{subsup} one has for all times $0 \leq t \leq T(\omega)$ that $u^{\epsilon}(x,t) \leq u^\epsilon(x,t) \leq u^{\epsilon,+}(x,t)$. So one gets:
\begin{align*}
 \| u^{\epsilon}(\cdot,t) - \chi_{\Sigma(t)}(\cdot,t) \|_{L^2} &\leq \| u^{\epsilon}(\cdot,t) - u^{\epsilon,+}(\cdot,t) \|_{L^2(D)}+\| u^{\epsilon,+}(\cdot,t) - \chi_{\Sigma^{\epsilon,+}}(t)(\cdot,t) \|_{L^2(D)}\\
& \qquad \qquad +\| \chi_{\Sigma^{\epsilon,+}}(t)(\cdot,t) - \chi_{\Sigma(t)}(\cdot,t) \|_{L^2(D)} \\
&\leq \| u^{\epsilon,-}(\cdot,t) - u^{\epsilon,+}(\cdot,t) \|_{L^2(D)}+\| u^{\epsilon,+}(\cdot,t) - \chi_{\Sigma^{\epsilon,+}}(t)(\cdot,t) \|_{L^2(D)}+\\
& \qquad \qquad \| \chi_{\Sigma^{\epsilon,+}(t)}(\cdot,t) - \chi_{\Sigma(t)}(\cdot,t) \|_{L^2(D)} \\
&\leq \| u^{\epsilon,-}(\cdot,t) - \chi_{\Sigma^{\epsilon,-}(t)}(\cdot,t) \|_{L^2(D)}+2\| u^{\epsilon,+}(\cdot,t) - \chi_{\Sigma^{\epsilon,+}(t)}(\cdot,t) \|_{L^2(D)} +\\
& \qquad 2\| \chi_{\Sigma^{\epsilon,+}(t)}(\cdot,t) - \chi_{\Sigma(t)}(\cdot,t) \|_{L^2(D)} +\| \chi_{\Sigma^{\epsilon,-}(t)}(\cdot,t) - \chi_{\Sigma(t)}(\cdot,t) \|_{L^2(D)}.
\end{align*}
The supremum in time of the first two terms converges to zero due to the definition of $u^{\epsilon,\pm}$. Consider $\| \chi_{\Sigma^{\epsilon,-}(t)}(\cdot) - \chi_{\Sigma(t}(\cdot) \|_{L^2(D)}= \int_{\mathcal{O}}\Bigl( \chi_{\Sigma^{\epsilon,-}(t)}(x) - \chi_{\Sigma(t)}(x)\Bigr) dx$. By Lemma \ref{lemme2} and by Theorem \ref{Thm2.1} the signed distance functions converge in $C^{2, \alpha}(\mathcal{O})$ uniformly in time and therefore this term converges to zero. The convergence of the term involving $\chi_{\Sigma^{\epsilon,-}(t)}$ can be seen in the same way.
\end{proof}

\end{document}